\newtheorem{theorem}{Theorem}[section]
\newtheorem{proposition}[theorem]{Proposition}
\newtheorem{lemma}[theorem]{Lemma}
\newtheorem{corollary}[theorem]{Corollary}
\newtheorem{prop-def}{Proposition-Definition}[section]
\newtheorem{defi}[theorem]{Definition}
\newenvironment{proof}{\trivlist \item[\hskip \labelsep{\it Proof.}]}{
 \endtrivlist}
\begin{document}

\title{Bijection between Conjugacy Classes and Irreducible \\Representations of Finite Inverse Semigroups
\footnote{Project partially supported by national NSF of China (No 11171202).}
}
\date{}
\author{Zhenheng Li, Zhuo Li}
\maketitle

\vspace{ -1.2cm}

\vskip 7mm

\begin{abstract}
In this paper we show that the irreducible representations of a finite inverse semigroup $S$ over an algebraically closed field $F$ are in bijection with the conjugacy classes of $S$ if the characteristic of $F$ is zero or a prime number that does not divide the order of any maximal subgroup of $S$.

\vspace{ 0.3cm}
\noindent
{\bf Keywords:} Conjugacy, Irreducible representation, Finite inverse semigroup.

\vspace{ 0.3cm}
\noindent {\bf Mathematics Subject Classification 2010:}
20M18, 20M32.

\end{abstract}

\def\J {{\cal J}}
\def\a {\alpha}
\def\b {\beta}
\def\e {\varepsilon}
\def\s {\sigma}
\def\v {\vec}
\def\js {(\cal J, \sigma)}
\def\R {{\bf R}_n}

\def\F {{\cal F}}
\def\ua {\underline a}
\def\ut {\underline t}
\def\e {\varepsilon}

\def\n { {\bf n} }
\def\r { {\bf r} }
\def\ek {\eta_{K}}
\def\eJ {\eta_{J}}
\newcommand{\be }{ \begin {}\\ \end{} }

\def\l {\langle}
\def\ra {\rangle}

\baselineskip 18pt

\section{Introduction}
A useful result in group representation theory is that there is a bijection between the conjugacy classes of a finite group $G$ and the irreducible representations of $G$ over an algebraically closed field $F$ with characteristic not a factor of the order of $|G|$. Is this the same for the case of semigroup representation theory?

There are different conjugacy relations in semigroup theory. The following two are natural generalizations of the usual group conjugacy. Let $S$ be a monoid with unit group $G$. Two elements $a, b\in S$ are $G$-conjugate, denoted by $a \sim_G b$, if there is $g\in G$ such that $b =  g a g^{-1}$. Let $S$ be a semigroup. Then elements $a, b\in S$ are called {\it primarily $S$-conjugate}, denoted by $a \sim_p b$, if there are $x, y\in S$ for which $a=xy$ and $b=yx$. This relation is reflexive and symmetric, but not transitive. Its transitive closure is referred to as $S$-conjugacy and will be denoted by $\sim$. If $S$ is a group, then $\sim_G$ and $\sim$ coincide with the usual group conjugacy. If $S$ is a monoid and $a \sim_G b$ then  $a\sim b$, in other words, $\sim_G$ is finer than $\sim.$

Which conjugacy will lead to an affirmative answer to the question above? Not the $\sim_G$-conjugacy. Indeed, there are usually more $\sim_G$-conjugacy classes in a monoid than its irreducible representations over $F$. For example, the rook monoid of size 3 has 10 $\sim_G$-conjugacy classes but 7 irreducible representations (see \cite{LLC, LS2} for more details). The $S$-conjugacy will do.

There are recently a great deal of developments about $S$-conjugacy; we indicate briefly a few of them. Kudryavtseva investigated this conjugacy in regular epigroups with many elegant results \cite{Ku}, and is one of the main references for this paper. Lallement studied $S$-conjugacy for free semigroups \cite{La}. Ganyushkin, Kormysheva and Mazorchuk described $S$-conjugacy in the symmetric inverse monoid $R_n$, showing that two elements are $S$-conjugate if and only if they have the same stable rank and the restrictions to their stable images have the same cycle type \cite{GK, GM}. Kudryavtseva and Mazorchuk then characterized $S$-conjugacy in Brauer-type semigroups and semigroups of square matrices \cite{KM1, KM2}.

After the first wave by Clifford \cite{C, C2}, Lallement and Petrich \cite{LP}, Munn \cite{M0, M1, M2}, Ponizovskii \cite{P}, and Preston \cite{Pr, Pr2, Pr3}, there were many new developments in representation theory of finite semigroups. Rhodes and Zalcstein \cite{RZ} obtained explicit constructions of the irreducible representations. Bidigare {\it et al} \cite{BHR} and Brown \cite{B, B2} found applications of semigroup representations to random walks, and established connections to Solomon's descent algebra. Putcha \cite{PU3, PU5} studied the representation theory of arbitrary finite monoids and determined all the irreducible characters of full transformation semigroups.
Using Harish-Chandra's theory of cuspidal representations of finite groups of Lie type, Okn\'{\i}nski and Putcha \cite{OP} showed that every complex representation of a finite monoid of Lie type is completely reducible.

Solomon \cite{LS2} reformulated the Munn theory and determined the irreducible representations of $R_n$, with useful applications, by introducing central idempotents in the monoid algebra $FR_n$ where $F$ is a field of characteristic 0. Quite recently in \cite{LLC, LLC2}, this theory has been generalized to any Renner monoid. It turns out that the irreducible representations of the Renner monoid are completely determined by those of the parabolic subgroups of the Weyl group, and the number of inequivalent irreducible representations of the Renner monoid is the same as the number of $S$-conjugacy classes of the monoid.

Steinberg \cite{S1, S2} went even further, generalizing Solomon's approach to any finite inverse semigroup $S$.
Using the M\"obius function on $S$, Steinberg found the decomposition of the monoid algebra into a direct sum of matrix algebras over group rings and obtained the character formula for multiplicities. His formula is versatile in that he gave applications to decomposing tensor powers and exterior products of rook matrix representations in a more general setting.

The representation story is long, but one part is missing: whether there is a bijection between the irreducible representations of a finite inverse semigroup $S$ over an algebraically closed field and $S$-conjugacy classes? The work of Kudryavtseva \cite{Ku} on $S$-conjugacy in regular epigroups and that of Steinberg \cite{S1, S2} on the representations of finite inverse semigroups lead the author to give an affirmative answer.

The organization of the paper is as follows. Section 2 provides necessary facts and background information on $S$-conjugacy in a semigroup. Section 3 is the main section and establishes a bijection between $S$-conjugacy classes of a finite inverse semigroup and the irreducible representations of $S$ via the usual group conjugacy classes of all the maximal subgroups of $S$.

\section{Preliminaries}

Let $S$ be a semigroup and $a\in S$. Denote by $D_a$ and $H_a$ the $\mathcal D$-class and $\mathcal H$-class at $a$ in $S$, respectively (see standard textbooks \cite{CP} or \cite{Ho} for Green relations). An element $a\in S$ is a group-bound element if there exists a positive integer $k$ such that $a^k$ lies in a subgroup of $S$. If every element of $S$ is group-bound, we call $S$ an epigroup, which is also named as a group-bound semigroup or strongly $\pi$-regular semigroup in the literature. Every finite semigroup is an epigroup; so is the full matrix monoid consisting of all square matrices over a field. Let $a\in S$ be group-bound such that $H_{a^k}$ is a group whose identity element is denoted by $e_a$. It follows from Lemma 1 of \cite{Ku} that the identity element $e_a$ is well-defined. We call the element $e_a$ the {\it idempotent induced from} $a$ and the element $a e_a$ the {\it invertible part} of $a$.

An element $a$ of a semigroup $S$ is called regular if there exists $b$ in $S$ such that $aba = b.$ The semigroup $S$ is called regular if all its elements are regular. The elements $a, b\in S$ are referred to as mutually inverse if $a = aba$ and $b=bab.$
The following results taken from \cite{Ku} are key in our discussion.

\begin{theorem}\label{SConjugacyResults}
Let $S$ be a semigroup and $a, b\in S$.

{\rm (a)} The invertible part $a e_a$ of $a$ is a group element and $\mathcal H$-related to $e_a$, and $a e_a = e_a a$.

{\rm (b)} If $a$ and $b$ are group elements and $a \sim b$, then $a \mathcal D b$ and $a \sim_p b$.

{\rm (c)} If $a$ and $b$ are group elements with $a \mathcal H b$ and $a \sim_p b$, then there exists $h\in H_{e_a}$ such that $a=hbh^{-1}$.

{\rm (d)} If $S$ is regular and $a$ is group-bound, then  $a \sim a e_a$.

{\rm (e)} If $S$ is a regular epigroup, then $a \sim b$ if and only if $ae_a \sim be_b$ if and only if there exists a pair of mutually inverse elements $u, v \in S$ such that
$a e_a = u (b e_b) v$ and $b e_b = v (a e_a) u.$
\end{theorem}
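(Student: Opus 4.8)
\emph{Sketch.}
These statements record how $S$-conjugacy interacts with Green's relations, and the plan is to isolate one device that powers (b)--(e): \emph{primary conjugacy descends to invertible parts}, namely $a\sim_p b$ forces $ae_a\sim_p be_b$. One obtains this from the identities $a^{n}=x\,b^{\,n-1}y$ (valid for all $n\ge 1$ when $a=xy$ and $b=yx$), by taking $n$ large enough that $a^{n},a^{n-1}$ lie in $H_{e_a}$ and $b^{n-1},b^{n-2}$ in $H_{e_b}$, and then tracking the group inverses of those powers. Part (a), by contrast, concerns a single group-bound element. Fix $k$ with $a^{k}\in H_{a^{k}}=:H$, a subgroup with identity $e_a$. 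A standard comparison of $\mathcal R$- and $\mathcal L$-divisors, using the inverse $(a^{k})^{-1}$ inside $H$, shows $a^{k+1}\,\mathcal H\,a^{k}$, so $a^{k+1}\in H$ as well. Then $ae_a=a^{k+1}(a^{k})^{-1}$ and $e_a a=(a^{k})^{-1}a^{k+1}$ both lie in $H=H_{e_a}$, so $ae_a$ is a group element $\mathcal H$-related to $e_a$; and since the powers $a^{k},a^{k+1}$ commute inside the group $H$, these two expressions agree, giving $ae_a=e_a a$.

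For (b), take a chain $a=c_0\sim_p c_1\sim_p\cdots\sim_p c_m=b$ realizing $a\sim b$, apply the descent device to each link, and use $a=ae_a$, $b=be_b$ (valid because $a,b$ are group elements) to obtain a chain $a\sim_p c_1e_{c_1}\sim_p\cdots\sim_p b$ consisting entirely of group elements. Next one shows that primarily conjugate group elements are $\mathcal D$-related: if $u=xy$, $v=yx$ with $u,v$ group elements, then $u^{2}=xvy\le_{\mathcal J}v$ while $u\,\mathcal J\,u^{2}$ (both units of $H_{e_u}$), so $u\le_{\mathcal J}v$; symmetrically $v\le_{\mathcal J}u$, so $u\,\mathcal J\,v$, hence $u\,\mathcal D\,v$ (the $\mathcal D$-classes involved being regular). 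Transitivity of $\mathcal D$ along the chain yields $a\,\mathcal D\,b$. Finally, inside a single regular $\mathcal D$-class one uses the Rees-matrix coordinates and the Schützenberger group to compose the successive factorizations witnessing the chain into one, so it collapses to $a\sim_p b$.

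Part (c) is the one-step case made concrete: if $a\,\mathcal H\,b$ and both are group elements then $a,b$ lie in the same maximal subgroup $G=H_{e_a}$, and writing $a=xy$, $b=yx$ and conjugating the factors by $e_a$ on the appropriate sides realizes the situation inside $G$, where $yx=x^{-1}(xy)x$ gives $a=hbh^{-1}$ with $h\in G$ the element built from $x$. For (d), use regularity to choose an inverse $a'$ of $a$ and put $f=aa'$, an idempotent with $f\,\mathcal R\,a$ and $fa=a$; then $a=f\cdot a\sim_p a\cdot f$, and iterating $a\mapsto af$, together with the descent device (which pins the invertible part down), drives $a$ to $ae_a$, the process terminating since $S$ is finite. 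Part (e) assembles the rest: $a\sim b\iff ae_a\sim be_b$ is immediate from (d); if $ae_a\sim be_b$ then (b) gives $ae_a\sim_p be_b$ and $ae_a\,\mathcal D\,be_b$, and applying (c) inside the common $\mathcal D$-class (after regularizing the witnessing factors so that $e_a,e_b$ absorb them correctly) produces a mutually inverse pair $u,v$ with $ae_a=u(be_b)v$ and $be_b=v(ae_a)u$; conversely, from such a pair one checks that the idempotent $vu$ fixes $be_b$ on both sides, so $x:=ue_b$ and $y:=(be_b)v$ satisfy $xy=ae_a$ and $yx=be_b$, whence $ae_a\sim_p be_b$ and, by (d) once more, $a\sim b$.

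The main obstacle is the descent device together with the chain-collapse in (b): one must keep careful track of the factorizations $a^{n}=x\,b^{\,n-1}y$ and of the group inverses of high powers to conclude that the invertible parts of primarily conjugate elements are again primarily conjugate, and then check that inside a single regular $\mathcal D$-class every $S$-conjugacy chain between group elements can be rewritten as a single primary step; the remaining arguments are routine bookkeeping with Green's relations. (Upgrading the statements from finite $S$ to arbitrary epigroups, as in \cite{Ku}, would further require replacing the finiteness-based termination in (d) by the structure theory of group-bound elements.)
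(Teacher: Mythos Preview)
The paper does not supply a proof of this theorem: the sentence immediately preceding it reads ``The following results taken from \cite{Ku} are key in our discussion,'' and the statement is thereafter used as a black box. So there is no in-paper argument to compare against; your sketch is an attempt to reconstruct what \cite{Ku} does.

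Your organizing device---that $a\sim_p b$ forces $ae_a\sim_p be_b$---is indeed the right engine, and your treatment of (a) is correct. But several of the remaining steps are not the ``routine bookkeeping'' you describe, and your sketch does not close them. In (d), the iteration $a\mapsto af$ with $f=aa'$ stalls at once, since $(af)f=af$; even if you pick a fresh inverse at each stage, you have given no mechanism forcing the sequence to land on $ae_a$ rather than on some other element or cycle, and invoking finiteness yields only eventual periodicity, not convergence to the invertible part. Since the hypothesis of (d) is merely that $S$ is regular and $a$ group-bound, a correct argument cannot rest on finiteness at all, which you acknowledge---but the convergence problem is separate and unaddressed. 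In (c), writing $a=xy$, $b=yx$ with $a,b\in G=H_{e_a}$ does not place $x$ in $G$, so the identity $yx=x^{-1}(xy)x$ is unavailable; one must manufacture an element of $G$ from $x$ (or $y$) using the $\mathcal R$/$\mathcal L$-structure of the ambient $\mathcal D$-class, and that construction is the actual content of (c), not a side remark. Finally, the ``chain collapse'' in (b)---reducing a $\sim$-chain of group elements in a single regular $\mathcal D$-class to one primary step---is precisely the substantive part of that statement in \cite{Ku} and cannot be dismissed as Rees-matrix bookkeeping.
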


\subsection{Inverse semigroups}
A semigroup $S$ is an inverse semigroup if each element $a\in S$ has a unique inverse $a^{-1}\in S$. It follows from the Vagner-Preston theorem that an inverse semigroup can be embedded faithfully into a symmetric inverse monoid $I_X$ on a set $X$ (see Theorem 5.1.7 of \cite{Ho}), where $I_X$ consists of all partial permutations of $X$. A partial permutation of $X$ is a bijection $a: I \rightarrow J$ with $I, J \subseteq X.$ The sets $I$ and $J$ are called the domain and range of $a$, respectively.

The binary product of two partial permutations in $I_X$ is the usual composition of partial functions. The identity permutation, denoted by $1$, is the identity element of $I_X$, and the empty partial permutation, denoted by 0, is the zero element. The unit group of $I_X$ consists of full permutations of $X$, this  group is isomorphic to the symmetric group on $X$.

If $X=\{1,2,\ldots,n\}$, we write $R_n$ for $I_X$. Notice that $R_n$ is called the rook monoid of size $n$ in combinatorics. Indeed, $R_n$ can be identified with the set of zero-one matrices which have at most one entry equal to 1 in each row and column. For instance with
${\bf n}=\{1, 2, 3, 4\}$, the following partial permutation
\[
    a =    \begin{pmatrix}
                    1 & 2 & 3 & 4 \\
                    - & 1 & 2 & 3 \\
                \end{pmatrix}
\]
corresponds to the matrix
$$
                        \begin{pmatrix}
                        0&1&0&0\\
                        0&0&1&0\\
                        0&0&0&1\\
                        0&0&0&0\\
                        \end{pmatrix}
$$
which has a one in position $i, j$ if $a(j) = i$ and zero otherwise. The domain of this partial permutation is $\{ 2, 3, 4\}$ (column indices of 1's in the matrix) and range $\{1, 2, 3\}$ (row indices of the 1's).

We gather some basic properties and notation from Lawson \cite{Law} and Steinberg \cite{S2}. If $\s\in R_n$, denote by dom($\s$) the domain of $\s\in R_n$ and ran($\s$) the range. Then
\[
\begin{aligned}
   \s^{-1}\s  &= 1_{\mbox{dom}(\s)}    \\
    \s\s^{-1} &= 1_{\mbox{ran}(\s)},
\end{aligned}
\]
where $\s$ acts on the left of $\n$. Embedding an inverse semigroup $S$ into $R_n$ via rook matrices, we can regard $a^{-1}a$ the domain of $a$ and $aa^{-1}$ the range of $a$. This allows us to write
\[
\begin{aligned}
   a^{-1}a  &= \mbox{dom}(a)    \\
    aa^{-1} &= \mbox{ran}(a),
\end{aligned}
\]
where we identify a partial identity on a subset with the subset itself. Furthermore, this gives us the freedom to think of $a$ as a bijection from dom$(a)$ to ran$(a)$.

Denote by $E(S)$ the set of idempotents of $S$, which are crucial in determining the structure and representations of $S$. Define for every $e\in E(S)$
\[
    G(e) = \{a \in S \mid \text{dom}(a) = e = \text{ran}(a)\}.
\]
Then $G(e)$ is a permutation group with the identity element $e$, and it is the same as the $\mathcal H$-class $H_e$, the maximal subgroup of $S$ at $e$.

If $e, f\in S$ are $\mathcal D$-related, it follows from Proposition 2.3.5 of \cite{Ho} that there exists an element $t\in S$ such that dom$(t) = f$ and ran$(t) = e.$  Fix such $t$. Then the following mapping given by
\begin{equation}\label{sigmat}
    \s_t: \quad a \mapsto tat^{-1} \quad \text{for all } a\in G(f)
\end{equation}
is a group isomorphism of $G(f)$ onto $G(e)$.

\section{$S$-conjugacy and representations}
From now on, we assume that $S$ is a finite inverse semigroup, though some of the following arguments are valid for infinite inverse semigroups. Moreover, the $S$-conjugacy class of $a$ in $S$ will be denoted by $[a]$.

Let $\Lambda$ be a set of idempotents of $S$ such that the $\mathcal D$-classes $\{D_e\mid e\in \Lambda\}$ are a partition of $S$. Fix such $\Lambda$. Then
\[
    S = \bigsqcup_{e\in \Lambda} D_e.
\]

\begin{defi}
Let $a$ be an element of $S$. If its invertible part $ae_a$ lies in $D_{e}$ for some $e \in \Lambda$, then $e$ is called the subrank of $a$.
\end{defi}

\begin{lemma}\label{conjugation} Let $a\in S$.

{\rm (a)} All elements in $[a]$ have the same subrank.

{\rm (b)} If $a$ has subrank $e \in \Lambda$ and $e_a$ is the idempotent induced from $a$, then there exists $t\in S$ such that $t^{-1}t = e_a$ and $tt^{-1} = e$. Furthermore, $tat^{-1} \in [a]$.
\end{lemma}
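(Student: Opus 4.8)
For part (a), the plan is to reduce everything to the behaviour of the invertible part under $\sim$. By Theorem \ref{SConjugacyResults}(a) the invertible part $ae_a$ is a group element, and by Theorem \ref{SConjugacyResults}(d) (applicable since a finite inverse semigroup is a regular epigroup) we have $a \sim ae_a$; likewise $b \sim be_b$ for any $b \in [a]$. Hence $a \sim b$ forces $ae_a \sim be_b$ by transitivity of $\sim$ (this is also exactly the content of Theorem \ref{SConjugacyResults}(e)). Now both $ae_a$ and $be_b$ are group elements that are $\sim$-conjugate, so Theorem \ref{SConjugacyResults}(b) yields $ae_a \,\mathcal D\, be_b$. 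Since the $\mathcal D$-classes $\{D_e : e \in \Lambda\}$ partition $S$, there is a unique $e \in \Lambda$ with $ae_a \in D_e$, and then $be_b$ lies in the same $D_e$; thus $a$ and $b$ have the same subrank. The only subtlety to check is that $\sim$ really is transitive on the relevant elements — but by definition $\sim$ is the transitive closure of $\sim_p$, so this is automatic.

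For part (b), first I would pin down the $\mathcal D$-class of $e_a$. By Theorem \ref{SConjugacyResults}(a), $ae_a \,\mathcal H\, e_a$, so in particular $ae_a \,\mathcal D\, e_a$; since $a$ has subrank $e$, we have $ae_a \in D_e$, and hence $e_a \,\mathcal D\, e$. Both $e_a$ and $e$ are idempotents, so they are $\mathcal D$-related idempotents, and the discussion preceding \eqref{sigmat} — based on Proposition 2.3.5 of \cite{Ho} — gives an element $t \in S$ with $\mathrm{dom}(t) = e_a$ and $\mathrm{ran}(t) = e$, i.e. $t^{-1}t = e_a$ and $tt^{-1} = e$. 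This settles the first assertion.

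It remains to show $tat^{-1} \in [a]$. The natural route is to exhibit $tat^{-1}$ and $a$ as endpoints of a short $\sim_p$-chain, or more directly to verify the hypotheses of Theorem \ref{SConjugacyResults}(e): I would check that $u = t$ and $v = t^{-1}$ form a pair of mutually inverse elements (immediate, since $tt^{-1}t = t$ and $t^{-1}tt^{-1} = t^{-1}$ in an inverse semigroup) and then compute the invertible part of $b := tat^{-1}$. The key computation is that $e_b = t e_a t^{-1} = tt^{-1} = e$ and $b e_b = t(ae_a)t^{-1}$; using $t^{-1}t = e_a$ and $e_a a = a e_a$ (Theorem \ref{SConjugacyResults}(a)) one gets $t(ae_a)t^{-1} \cdot t^{-1} = \dots$ — I would verify $(ae_a) = t^{-1}\bigl(b e_b\bigr) t$ and $b e_b = t (a e_a) t^{-1}$, which is exactly the symmetric condition in Theorem \ref{SConjugacyResults}(e) with the mutually inverse pair $(t, t^{-1})$. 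That theorem then gives $ae_a \sim be_b$, hence $a \sim ae_a \sim be_b \sim b$, so $b = tat^{-1} \in [a]$.

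The main obstacle I anticipate is the bookkeeping in this last computation: showing that conjugation by $t$ carries $e_a$ to $e$ and $ae_a$ to $b e_b$ requires carefully using the identities $t^{-1}t = e_a$, $tt^{-1} = e$, the commuting relation $a e_a = e_a a$, and the fact that $e_a$ acts as a two-sided identity on $ae_a$ (since $ae_a \in H_{e_a}$). Getting the idempotent-induced-from-$b$ computation right — i.e.\ that $tat^{-1}$ is again group-bound with induced idempotent exactly $e$ — is where one must be most careful, though in a finite inverse semigroup every element is group-bound so existence is free and only the identification $e_b = e$ needs the above manipulations.
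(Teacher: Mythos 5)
Your proposal is correct and follows essentially the same route as the paper: part (a) via Theorem \ref{SConjugacyResults}(b) and (e) applied to the invertible parts, and part (b) by locating $e_a$ in $D_e$ through $ae_a \,\mathcal H\, e_a$, invoking Proposition 2.3.5 of \cite{Ho} to produce $t$, and then verifying $e_b = te_at^{-1} = e$, $be_b = t(ae_a)t^{-1}$, $ae_a = t^{-1}(be_b)t$ so that Theorem \ref{SConjugacyResults}(e) applies with the mutually inverse pair $t, t^{-1}$. The only difference is that you spell out part (a), which the paper dismisses as straightforward.
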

\begin{proof} The proof of (a) is straightforward by Theorem \ref{SConjugacyResults} (b) and (e). As for (b), by Theorem \ref{SConjugacyResults} (a) we see that the invertible part $ae_a$ of $a$ is $\mathcal H$-related to the idempotent $e_a$ induced from $a$, so $ae_a\in D_{e_a}$. But $ae_a\in D_e$ by the assumption. Thus $e_a \in D_e$.
It follows from Proposition 2.3.5 of \cite{Ho} that there is $t\in S$ such that $t^{-1}t = e_a$ and $tt^{-1} = e,$ which imply that $te_at^{-1} = e$ and $t^{-1}et = e_a$. Write $b=tat^{-1}$. Then the idempotent $e_b$ induced from $b$ is equal to $e$. In addition, $be_b = (tat^{-1})(te_at^{-1}) = tae_at^{-1}$ and $ae_a = t^{-1}be_b t$. In view of Theorem \ref{SConjugacyResults} (e), we deduce that $tat^{-1}\in [a]$.  $\hfill\Box$
\end{proof}

However, $t[a]t^{-1} \ne [a]$ in general; it can even happen that $t[a]t^{-1} \nsubseteq [a]$. For example, in the rook monoid $R_3$, let $a = (32)[1]$. Then $[a] = \{(12)[3], (32)[1], (31)[2]\}$, where the notation $(ij)[k]$ means the partial permutation
\[
\begin{pmatrix}
    i & j & k \\
    j & i & - \\
\end{pmatrix}
\]
for $1\le i, j, k \le 3$ and no two of them being the same. Fix $\Lambda = \{0, e_1, e_2, 1\}$, where
$e_1 = {\rm diag}\{1, 0, 0\}$ and $e_2 = {\rm diag}\{1, 1, 0\}$. The subrank of $[a]$ is $e_2$. But $e_a=a^2 = {\rm diag}\{0, 1, 1\}$.
Take $$
t = \begin{pmatrix}
            0&1&0\\
            0&0&1\\
            0&0&0\\
    \end{pmatrix}\in R_3
$$
with dom$(t) = \{2, 3\}$ and ran$(t) = \{1, 2\}$. So $t=[321]$ and $t^{-1} = [123]$. A simple calculation yields that the element $t \big((31)[2]\big)t^{-1} = 0 \notin [a].$

\begin{lemma}\label{meetW}
Each $S$-conjugacy class $[a]$ in $S$ meets a unique maximal subgroup $G(e)$ with $e\in \Lambda$. More specifically, $e$ is the subrank of $a$.
\end{lemma}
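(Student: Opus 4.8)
The plan is to show two things: first, that $[a]$ meets the maximal subgroup $G(e)$ where $e$ is the subrank of $a$; and second, that $[a]$ cannot meet $G(f)$ for any other $f \in \Lambda$. The existence part is essentially done already: by Lemma~\ref{conjugation}(b), since $a$ has subrank $e$, there is $t \in S$ with $t^{-1}t = e_a$, $tt^{-1} = e$, and $tat^{-1} \in [a]$. I would then observe that the element $b := tat^{-1}$ actually lies in $G(e)$. Indeed, from the computations in the proof of Lemma~\ref{conjugation}(b) we have $e_b = e = tt^{-1}$, so $b e_b = tae_at^{-1}$, and since $ae_a$ is $\mathcal H$-related to $e_a$ by Theorem~\ref{SConjugacyResults}(a), conjugating by $t$ shows $be_b$ is $\mathcal H$-related to $e$; hence $\mathrm{dom}(be_b) = \mathrm{ran}(be_b) = e$. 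It remains to replace $b$ by its invertible part: since $ae_a$ is already a group element $\mathcal H$-related to $e_a$, the element $tat^{-1}$ when $a$ is replaced by $ae_a$ gives a genuine element of $G(e)$, and by Theorem~\ref{SConjugacyResults}(d),(e) together with part (a) of the present lemma, this element is $S$-conjugate to $a$. So $[a] \cap G(e) \neq \emptyset$.

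For uniqueness, suppose $[a]$ meets $G(f)$ for some $f \in \Lambda$, say $c \in [a] \cap G(f)$. Then $c$ is a group element with $e_c = f$, so the invertible part of $c$ is $c$ itself, lying in $D_f$; thus the subrank of $c$ is $f$. But by Lemma~\ref{conjugation}(a), all elements of $[a]$ have the same subrank, namely $e$. Therefore $f = e$. Since the $\mathcal D$-classes $\{D_e \mid e \in \Lambda\}$ partition $S$ and distinct idempotents in $\Lambda$ lie in distinct $\mathcal D$-classes, $f = e$ as idempotents, which forces $G(f) = G(e)$. This establishes that the maximal subgroup in $\Lambda$ met by $[a]$ is unique and equals $G(e)$ with $e$ the subrank of $a$.

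The only genuinely delicate point is making sure that the element produced in the existence step lands \emph{inside} the group $G(e)$ rather than merely in $D_e$: the conjugate $tat^{-1}$ of $a$ need not be a group element (just as $a$ need not be), so one must conjugate the invertible part $ae_a$ instead and then invoke Theorem~\ref{SConjugacyResults}(d),(e) to see that this does not change the $S$-conjugacy class. I expect this bookkeeping — keeping straight the distinction between $a$, its invertible part $ae_a$, and the conjugated versions of each — to be the main thing requiring care, but it is routine given Theorem~\ref{SConjugacyResults} and Lemma~\ref{conjugation}.
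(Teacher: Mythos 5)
Your proof is correct and follows essentially the same route as the paper: existence by transporting the invertible part $ae_a \in G(e_a)$ into $G(e)$ via conjugation by $t$, and uniqueness from the fact that all elements of $[a]$ share the same subrank (Lemma \ref{conjugation}(a)). The only thing to add is that the ``delicate point'' you flag largely evaporates, since $e_a t^{-1} = t^{-1}$ gives $tat^{-1} = t(ae_a)t^{-1}$ identically, an observation the paper makes explicitly.
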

\begin{proof}
Let $e\in \Lambda$ be the subrank of $a$. The results of Theorem \ref{SConjugacyResults} (b) and (d) imply that $ae_a \in [a]\cap G(e_a).$ Since $e_a\in D_e$, there exists an element $t\in S$ such that dom$(t) = e_a$ and ran$(t) = e.$ The conjugation by $t$ is a group isomorphism of $G(e_a)$ onto $G(e)$. Observe that $tat^{-1} = tae_at^{-1}$. Thanks to Lemma \ref{conjugation} (b) we obtain that $t(ae_a)t^{-1}\in [a] \cap G(e)$. The uniqueness of $G(e)$ follows from the fact that all elements of $[a]$ have the same subrank. $\hfill\Box$
\end{proof}

\begin{proposition}\label{invertiblePartConjugacy} Let $a\in S$ have subrank $e\in \Lambda$. Then  $[a] \cap G(e) = \overline{\s_t(ae_a)}$, the usual group conjugacy class of $\s_t(ae_a)$ in $G(e)$, where $\s_t$ is defined as in {\rm(\ref{sigmat})}.
\end{proposition}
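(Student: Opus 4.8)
The plan is to prove the asserted equality by establishing the two inclusions separately, after first recording a preliminary observation that is used in both directions: by Theorem~\ref{SConjugacyResults}(a) the invertible part $ae_a$ is a group element lying in $G(e_a)=H_{e_a}$, the isomorphism $\s_t$ of (\ref{sigmat}) carries $G(e_a)$ onto $G(e)$, and since $\mathrm{dom}(t)=e_a$ gives $t^{-1}=e_at^{-1}$ we get $at^{-1}=(ae_a)t^{-1}$ and hence $tat^{-1}=t(ae_a)t^{-1}=\s_t(ae_a)$; by Lemma~\ref{conjugation}(b) this element lies in $[a]$. Thus $\s_t(ae_a)\in[a]\cap G(e)$, so in particular the group conjugacy class $\overline{\s_t(ae_a)}$ is meaningful and $\s_t(ae_a)\sim a$.

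For the inclusion $\overline{\s_t(ae_a)}\subseteq[a]\cap G(e)$ I would take an arbitrary $x=g\,\s_t(ae_a)\,g^{-1}$ with $g\in G(e)$. That $x\in G(e)$ is immediate since $G(e)$ is a group. To see $x\in[a]$, I would exhibit a primary conjugacy by setting $u=g$ and $v=\s_t(ae_a)g^{-1}$, so that $uv=x$ and $vu=\s_t(ae_a)g^{-1}g=\s_t(ae_a)e=\s_t(ae_a)$, the last equality because $e$ is the identity of the group $G(e)\ni\s_t(ae_a)$; hence $x\sim_p\s_t(ae_a)\sim a$, giving $x\in[a]$. This direction is routine.

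For the reverse inclusion $[a]\cap G(e)\subseteq\overline{\s_t(ae_a)}$ I would take $x\in[a]\cap G(e)$ and aim to invoke Theorem~\ref{SConjugacyResults}(c) with $x$ in the role of ``$a$'' and $\s_t(ae_a)$ in the role of ``$b$''. Both are group elements (they lie in $G(e)$), and they are $\mathcal H$-related since $G(e)=H_e$; and since both belong to the $S$-conjugacy class $[a]$ and $\sim$ is an equivalence relation, $x\sim\s_t(ae_a)$. The point on which the argument turns is upgrading this transitive conjugacy to a \emph{primary} one, which is exactly what Theorem~\ref{SConjugacyResults}(b) provides for group elements: $x\sim_p\s_t(ae_a)$. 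Then Theorem~\ref{SConjugacyResults}(c) yields $h$ in the maximal subgroup at the idempotent induced from $x$ with $x=h\,\s_t(ae_a)\,h^{-1}$; since $x$ is itself a group element of $H_e$, that induced idempotent is $e$, so $h\in H_e=G(e)$ and $x\in\overline{\s_t(ae_a)}$.

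The main obstacle is precisely this last passage: membership in $[a]$ only gives the transitive closure $\sim$, whereas Theorem~\ref{SConjugacyResults}(c) demands $\sim_p$; the argument works only because $x$ and $\s_t(ae_a)$ are genuine group elements, so part (b) collapses $\sim$ back to $\sim_p$. I would also add a short remark that $\overline{\s_t(ae_a)}$ is independent of the choice of $t$: any two admissible $t,t'$ differ by $t't^{-1}\in G(e)$, and conjugation by this element sends $\s_t(ae_a)$ to $\s_{t'}(ae_a)$, so the two group conjugacy classes coincide.
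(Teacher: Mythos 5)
Your proof is correct and rests on exactly the same mechanism as the paper's: Theorem~\ref{SConjugacyResults}(b) to collapse $\sim$ to $\sim_p$ for group elements, Theorem~\ref{SConjugacyResults}(c) to produce a genuine group conjugator, and the fact (via Lemma~\ref{conjugation}(b)) that $\s_t(ae_a)=tat^{-1}\in[a]\cap G(e)$. The only difference is organizational — the paper first proves $[a]\cap G(e_a)=\overline{ae_a}$ and then transports to $G(e)$ by conjugation with $t$, while you argue directly inside $G(e)$ — and your closing remark that $\overline{\s_t(ae_a)}$ is independent of the choice of $t$ is a small correct addition the paper leaves implicit.
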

\begin{proof}
We first show that  $[a] \cap G(e_a) = \overline{ae}_a$, the usual group conjugacy class of $ae_a$ in $G(e_a)$.
Clearly $\overline{ae}_a \subseteq [a] \cap G(e_a).$ We now prove the reverse inclusion. Let $x\in [a] \cap G(e_a)$. Note that $x$ and $ae_a$ are group elements in $G(e_a)$, and $x$ is $\mathcal H$-related to $ae_a$. It follows from Theorem \ref{SConjugacyResults} (b) that $x \sim_p ae_a.$ By Theorem \ref{SConjugacyResults} (c) there is an element $g\in G(e_a)$ for which $x = g (ae_a) g^{-1}$. Thus, $x\in \overline{ae}_a.$

Let $b = tat^{-1}$. A similar argument to that of Lemma \ref{conjugation} (b) shows that the invertible part of $b$ is $be$, where $e$ is the subrank of $a$.
Then $[b] \cap G(e) = \overline{be}$, the usual group conjugacy class of $be$ in $G(e)$. Thus $[a] \cap G(e) = \overline{be}$, since $[b]=[a]$. On the other hand, it is a simple matter that $t (\overline{ae}_a) t^{-1} = \overline{be}.$ Therefore, $[a] \cap G(e) = \overline{\s_t(ae_a)}$.     $\hfill\Box$
\end{proof}

\begin{theorem}\label{bijectionBetweenClasses}
There is a bijection between the set of $S$-conjugacy classes of a finite inverse semigroup $S$ and the set of group conjugate classes of the maximal subgroups $G(e)$ of $S$ for $e\in\Lambda$.
\end{theorem}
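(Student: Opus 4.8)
The plan is to assemble the bijection directly from the results already established. For each $S$-conjugacy class $[a]$, Lemma~\ref{meetW} tells us that $[a]$ meets exactly one maximal subgroup $G(e)$ with $e\in\Lambda$, namely the one indexed by the subrank $e$ of $a$; and Proposition~\ref{invertiblePartConjugacy} tells us that the intersection $[a]\cap G(e)$ is precisely a single group conjugacy class of $G(e)$, namely $\overline{\s_t(ae_a)}$. So the natural candidate map $\Phi$ sends $[a]$ to the pair $\bigl(e, [a]\cap G(e)\bigr)$, i.e.\ to the group conjugacy class $\overline{\s_t(ae_a)}$ sitting inside $G(e)$. First I would check that $\Phi$ is well-defined: the subrank is constant on $[a]$ by Lemma~\ref{conjugation}(a), and $[a]\cap G(e)$ depends only on the class $[a]$, not on the representative $a$, so $\Phi$ is unambiguous. (One should note the mild dependence of $\s_t$ on the choice of $t$, but any two choices differ by conjugation within $G(e)$, so the group conjugacy class $\overline{\s_t(ae_a)}$ is independent of that choice.)

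Next I would show $\Phi$ is injective. Suppose $[a]$ and $[b]$ have the same image. Then they have the same subrank $e$, and $[a]\cap G(e) = [b]\cap G(e)$ as subsets of $S$; in particular this common intersection is nonempty, so there is an element $c$ lying in both $[a]$ and $[b]$. Since $S$-conjugacy is an equivalence relation, $[a]=[c]=[b]$. Hence $\Phi$ is injective.

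For surjectivity, take any $e\in\Lambda$ and any group conjugacy class $C\subseteq G(e)$; pick $g\in C$. Then $g$ is a group element of $S$ lying in $G(e)=H_e$, so $e_g=e$ and the invertible part of $g$ is $g$ itself, whence the subrank of $g$ is $e$ (since $ge_g=g\in D_e$). Applying Proposition~\ref{invertiblePartConjugacy} to $a=g$ with the choice $t=e$ (so $\s_t$ is the identity on $G(e)$) gives $[g]\cap G(e)=\overline{g}=C$. Therefore $\Phi([g])=\bigl(e,C\bigr)$, so $\Phi$ is onto. Combining the three parts, $\Phi$ is a bijection between the $S$-conjugacy classes of $S$ and the disjoint union over $e\in\Lambda$ of the group conjugacy classes of $G(e)$, which is exactly the assertion.

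I do not expect a serious obstacle here: the substantive work has already been done in Lemmas~\ref{conjugation} and~\ref{meetW} and Proposition~\ref{invertiblePartConjugacy}, and what remains is a bookkeeping argument. The one point that needs a little care is confirming that the target of $\Phi$ is genuinely the set of group conjugacy classes of all the $G(e)$, $e\in\Lambda$ (as a disjoint union indexed by $\Lambda$), and that distinct $e$ contribute disjoint families of conjugacy classes --- this is immediate since the $G(e)$ lie in distinct $\mathcal D$-classes and hence are disjoint subsets of $S$, so no ambiguity about ``which $G(e)$'' a given conjugacy class belongs to can arise.
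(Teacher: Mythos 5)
Your proposal is correct and follows essentially the same route as the paper: the paper's proof is just a two-sentence appeal to Lemma~\ref{meetW} and Proposition~\ref{invertiblePartConjugacy} together with the decomposition $S=\bigsqcup_{e\in\Lambda}D_e$, and you have simply spelled out the resulting map $\Phi$ and verified well-definedness, injectivity, and surjectivity explicitly. Your surjectivity argument (taking $t=e$ so that $\s_t$ is the identity on $G(e)$) is a correct filling-in of a detail the paper leaves implicit.
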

\begin{proof} It follows from Proposition \ref{invertiblePartConjugacy} that the conjugate classes of $S$ that meet $G(e)$ are indexed by conjugate classes of $G(e)$ for $e\in \Lambda$. Thanks to Lemma \ref{meetW} and $S = \bigsqcup_{e\in \Lambda} D_e$, the proof is complete. $\hfill\Box$
\end{proof}

\begin{corollary}\label{numOfClasses}
The number of $S$-conjugacy classes of a finite inverse semigroup equals the sum of the numbers of group conjugate classes in $G(e)$ with $e\in\Lambda$. \hfill$\Box$
\end{corollary}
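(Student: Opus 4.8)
The plan is to read the count straight off the bijection furnished by Theorem~\ref{bijectionBetweenClasses}. Since a bijection between two sets forces their cardinalities to coincide, it suffices to compute the size of the indexing set on the group side, namely the totality of group conjugacy classes of the maximal subgroups $G(e)$ as $e$ ranges over $\Lambda$. First I would point out that for distinct idempotents $e, e' \in \Lambda$ the conjugacy classes of $G(e)$ and those of $G(e')$ are conjugacy classes of \emph{different} groups, hence formally distinct objects; thus the family of all such classes is naturally a disjoint union indexed by $\Lambda$, and its cardinality is $\sum_{e \in \Lambda}\bigl(\text{number of group conjugacy classes of }G(e)\bigr)$. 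Combining this with the bijection of Theorem~\ref{bijectionBetweenClasses} yields the stated equality.

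If one prefers to bypass the packaged theorem and argue directly from the lemmas, the plan would be as follows. By Lemma~\ref{meetW} every $S$-conjugacy class $[a]$ meets exactly one maximal subgroup $G(e)$ with $e \in \Lambda$, namely the one attached to the subrank $e$ of $a$; this partitions the set of $S$-conjugacy classes into blocks indexed by $\Lambda$ according to which $G(e)$ they hit. Within the block belonging to a fixed $e$, Proposition~\ref{invertiblePartConjugacy} supplies the assignment $[a] \mapsto [a] \cap G(e) = \overline{\s_t(ae_a)}$ onto the group conjugacy classes of $G(e)$; this map is surjective because each group conjugacy class of $G(e)$ is of this form, and injective because an $S$-conjugacy class is recovered from (indeed equal to the $S$-closure of) the group class it meets. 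Summing the block sizes over $e \in \Lambda$ then gives the corollary.

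I do not expect a genuine obstacle here: the corollary is essentially the bookkeeping observation that one counts the fibres of a bijection over a disjoint union. The only points that merit a moment's care — and both were already dispatched in the proof of Proposition~\ref{invertiblePartConjugacy} — are that the correspondence $[a] \mapsto [a] \cap G(e)$ is genuinely well defined (independent of the choice of $t$ and of the representative $a$) and bijective, and that the union over $\Lambda$ on the group side is disjoint, which is immediate once one records that the $G(e)$ for distinct $e \in \Lambda$ are distinct groups. Given those, the proof is a one-line count, which is presumably why the statement is offered as an immediate corollary.
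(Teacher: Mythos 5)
Your proposal is correct and matches the paper's route: the corollary is read off directly from the bijection of Theorem~\ref{bijectionBetweenClasses}, with the group-side count being the disjoint (over $e\in\Lambda$) union of conjugacy classes of the $G(e)$, which is exactly why the paper states it with no further proof. Your alternative second argument just re-traverses Lemma~\ref{meetW} and Proposition~\ref{invertiblePartConjugacy}, i.e.\ the proof of the theorem itself, so it adds nothing beyond the first paragraph.
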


We now describe representations of a finite inverse semigroup $S$ over a field $F$. Let $n_e$ be the number of idempotents in $D_e$. It follows from Theorems 4.5 and 4.6 of \cite{S2} that
\[
    FS = \bigoplus_{e\in\Lambda} FD_e
\]
and there exists an isomorphism $\psi_e$ of $FD_e$ onto $M_{n_e}(FG(e))$. Extend $\psi_e$ to an algebra homomorphism of $FS$ onto $M_{n_e}(FG(e))$ by $\psi_e(FD(f)) = 0$ for $f\in \Lambda\setminus\{e\}$. If $a\in FS$ then
\begin{eqnarray}\label{psidefinition2}
    \psi_e(a) = \sum_{i,j = 1}^{n_e} \beta_{ij}(a) E_{ij},
\end{eqnarray}
where $\beta_{ij}(a) \in FG(e)$ and $E_{ij}$ are the matrix units for $1 \le i, j \le n_e$.

Applying $\rho$ to the matrix entries of $\psi_e(a)$, we obtain that a representation $\rho$ of $FG(e)$ gives rise to a representation $\rho^*$ of $FS$ by
\begin{eqnarray}\label{rhostardefinition}
    \rho^*(a) = \sum_{i, j = 1}^{n_e} \rho\big(\beta_{i, j}(a)\big) E_{i, j}
\end{eqnarray}
with $\mbox{deg}\rho^* = n_e \mbox{deg} \rho.$ We identify a representation of $S$ with its $F$-linear extension to a representation of the semigroup algebra $FS$ and make a similar convention for representations of $G(e)$ and $FG(e)$.

\begin{lemma} \label{irreducibleRepresentations}
Let $F$ be a field, $S$ a finite inverse semigroup, and $\widehat{FG}(e)$ a full set of inequivalent irreducible representations of $FG(e)$. If the characteristic of $F$ is $0$ or a prime number not dividing the order of any maximal subgroup of $S$, then $\{\rho^* \mid \rho\in \widehat{FG}(e), ~e\in\Lambda\}$ is a full set of inequivalent irreducible representations of $FS.$
\end{lemma}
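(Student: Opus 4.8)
The plan is to read the irreducible representations of $FS$ off the block decomposition $FS=\bigoplus_{e\in\Lambda}FD_e$ with $FD_e\cong M_{n_e}(FG(e))$ recorded just before the statement, together with the Morita equivalence between a full matrix algebra and its coefficient ring. The characteristic hypothesis enters only through Maschke's theorem: it makes each group algebra $FG(e)$ semisimple, and since semisimplicity is inherited by full matrix algebras, each $M_{n_e}(FG(e))$ and hence $FS$ is semisimple. (The Morita step below is actually insensitive to semisimplicity, but having $FS$ semisimple makes the counting of irreducibles transparent.)

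First I would reduce to one block. Let $1_e\in FD_e$ be the identity of the block $FD_e$; these are orthogonal central idempotents of $FS$ with $\sum_{e\in\Lambda}1_e=1$, and $\psi_e$ is, up to the given isomorphism, the canonical surjection $FS\to M_{n_e}(FG(e))$ that annihilates $FD_f$ for $f\neq e$. If $V$ is a simple $FS$-module then $V=\bigoplus_{e\in\Lambda}1_eV$, and since each $1_eV$ is a submodule, simplicity forces $V=1_eV$ for exactly one $e$; thus $V$ is a simple $M_{n_e}(FG(e))$-module via $\psi_e$, and conversely every simple $M_{n_e}(FG(e))$-module pulls back along $\psi_e$ to a simple $FS$-module. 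Two such modules attached to distinct idempotents are inequivalent, since $FD_e$ acts nontrivially (via $1_e$) on any module attached to $e$ but as zero on every module attached to $f\neq e$. Hence a full set of inequivalent irreducibles of $FS$ is the disjoint union, over $e\in\Lambda$, of full sets of inequivalent irreducibles of $M_{n_e}(FG(e))$.

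Next, for a fixed $e$, I would use the description of simple modules over $M_{n_e}(B)$ with $B=FG(e)$: sending a simple $B$-module $V_\rho$ to the column module $V_\rho^{\oplus n_e}$, on which a matrix $(b_{ij})\in M_{n_e}(B)$ acts by $(v_j)_j\mapsto\bigl(\sum_j b_{ij}v_j\bigr)_i$, is a bijection between the isomorphism classes of simple $B$-modules and those of $M_{n_e}(B)$ — this is the standard Morita equivalence and is valid over any field. Unwinding the definitions (\ref{psidefinition2}) and (\ref{rhostardefinition}), this column module, regarded as a representation of $FS$ through $\psi_e$, is exactly $\rho^*$, of degree $n_e\deg\rho$ as claimed, and $\rho^*\cong(\rho')^*$ precisely when $\rho\cong\rho'$. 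Combined with the previous paragraph, this shows that $\{\rho^*\mid\rho\in\widehat{FG}(e),\ e\in\Lambda\}$ is a full set of inequivalent irreducible representations of $FS$.

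The step that I expect to need the most care is the identification in the third paragraph. One must run the matrix-algebra/Morita correspondence over a general field $F$, where an irreducible representation of $FG(e)$ need not surject onto a full matrix algebra and a bare dimension count is not available, and one must check that the module produced by the abstract equivalence coincides on the nose — not merely up to isomorphism — with the explicitly defined $\rho^*$ of (\ref{rhostardefinition}). Once that is pinned down, the remaining points (pairwise inequivalence across the members of $\Lambda$, and completeness) follow formally from the block decomposition and Maschke's theorem.
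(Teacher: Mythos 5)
Your argument is correct. Note, though, that the paper itself does not write out a proof: its proof of Lemma \ref{irreducibleRepresentations} is the single line that the result follows from Lemma 2.22 of \cite{LS2} ``with a little modification,'' i.e.\ it outsources the work to Solomon's treatment of the rook monoid, adapted to a general finite inverse semigroup through Steinberg's decomposition $FS=\bigoplus_{e\in\Lambda}FD_e$ with $FD_e\cong M_{n_e}(FG(e))$ quoted just before the lemma. What you have written is essentially that adaptation carried out in full: reduction to a single block via the orthogonal central idempotents $1_e$, then the Morita correspondence $V\mapsto V^{\oplus n_e}$ between simple $FG(e)$-modules and simple $M_{n_e}(FG(e))$-modules, with the column module identified with the $\rho^*$ of (\ref{rhostardefinition}); this buys a self-contained proof where the paper has only a citation. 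Your parenthetical observation is also accurate and slightly sharper than the statement: since Steinberg's isomorphism holds over any field, the bijection asserted in the lemma does not actually need the characteristic hypothesis; that hypothesis (together with algebraic closedness) is only needed later, when irreducibles of $G(e)$ are matched with conjugacy classes in Theorem \ref{mainTheorem}, or if one wants $FS$ semisimple. One small remark: the step you flagged as needing the most care --- that the module produced by the Morita equivalence coincides ``on the nose'' with $\rho^*$ --- is more than is required, since a full set of inequivalent irreducible representations is only determined up to equivalence; agreement up to isomorphism, which is immediate from unwinding (\ref{psidefinition2}) and (\ref{rhostardefinition}), suffices.
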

\begin{proof}
The result follows from Lemma 2.22 of \cite{LS2} with a little modification.  $\hfill\Box$
\end{proof}

We can now state our main result describing the relationship between $S$-conjugacy classes in $S$ and representations of $S$ over an algebraically closed field.

\begin{theorem} \label{mainTheorem}
Let $F$ be an algebraically closed field and $S$ a finite inverse semigroup. If the characteristic of $F$ is $0$ or a prime number not dividing the order of any maximal subgroup of $S$, then the set of inequivalent irreducible representations of $S$ over $F$ is in bijection with the set of $S$-conjugacy classes in $S$.
\end{theorem}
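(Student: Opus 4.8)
The plan is to produce the bijection as a composite of three correspondences, only one of which requires anything beyond the results already established. First I would invoke Theorem~\ref{bijectionBetweenClasses}, in the explicit form given by Proposition~\ref{invertiblePartConjugacy}: the assignment $[a]\mapsto \overline{\s_t(ae_a)}$, where $e\in\Lambda$ is the subrank of $a$, identifies the set of $S$-conjugacy classes of $S$ with the disjoint union $\bigsqcup_{e\in\Lambda}\mathrm{Cl}(G(e))$, writing $\mathrm{Cl}(G(e))$ for the set of ordinary conjugacy classes of the maximal subgroup $G(e)$.

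Next, fix $e\in\Lambda$. The hypothesis forces $\operatorname{char} F$ to be $0$ or a prime not dividing $|G(e)|$, so by Maschke's theorem $FG(e)$ is semisimple, and since $F$ is algebraically closed it is split semisimple; hence $|\widehat{FG}(e)|=\dim_F Z(FG(e))=|\mathrm{Cl}(G(e))|$. Choosing once and for all, for each $e$, a bijection $\mathrm{Cl}(G(e))\to\widehat{FG}(e)$ (for instance the one coming from the character table of $G(e)$) gives a bijection $\bigsqcup_{e\in\Lambda}\mathrm{Cl}(G(e))\to\bigsqcup_{e\in\Lambda}\widehat{FG}(e)$. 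Finally, Lemma~\ref{irreducibleRepresentations} states precisely that $\rho\mapsto\rho^*$ is a bijection from $\bigsqcup_{e\in\Lambda}\widehat{FG}(e)$ onto a full set of inequivalent irreducible representations of $S$ over $F$. Composing the three bijections completes the argument.

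I do not expect a real obstacle here: the structural work — Steinberg's decomposition $FS=\bigoplus_{e\in\Lambda}FD_e$ with $FD_e\cong M_{n_e}(FG(e))$ that underlies Lemma~\ref{irreducibleRepresentations}, and the analysis of $S$-conjugacy in Lemmas~\ref{conjugation} and~\ref{meetW} and Proposition~\ref{invertiblePartConjugacy} — is already in place. Two points deserve a remark. First, the middle step is only a bijection of sets, since the conjugacy-class/irreducible-character correspondence for a finite group is not canonical; consequently the bijection obtained for $S$ is likewise merely a set bijection, which is all the theorem asserts. Second, the hypothesis that $\operatorname{char} F$ divides the order of no maximal subgroup of $S$ is used twice and essentially — to apply Lemma~\ref{irreducibleRepresentations} and to make every $FG(e)$ semisimple — so nothing extra is smuggled in. If only the cardinalities were at issue, the statement would drop out of Corollary~\ref{numOfClasses} together with $|\widehat{FG}(e)|=|\mathrm{Cl}(G(e))|$; the substance of the theorem is the assembly of this count into the explicit composite described above.
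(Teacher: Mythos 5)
Your proposal is correct and follows essentially the same route as the paper: compose Theorem~\ref{bijectionBetweenClasses} (via Proposition~\ref{invertiblePartConjugacy}), the standard group-theoretic correspondence between conjugacy classes and irreducibles of each $G(e)$ over an algebraically closed field of good characteristic, and Lemma~\ref{irreducibleRepresentations}. The paper's proof is just a terser statement of this same composite, so there is nothing to add.
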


\begin{proof}
 Lemma \ref{irreducibleRepresentations} shows that the irreducible
representations of $S$ are in bijection with the irreducible representations of maximal subgroups $G(e)$ where $e\in\Lambda$. On the other hand, from group representation theory, the irreducible representations of $G(e)$ are in bijection with the group conjugate classes of $G(e)$, since $F$ is algebraically closed. The desired result follows from Theorem \ref{bijectionBetweenClasses}. $\hfill\Box$
\end{proof}

\begin{corollary} \label{mainCorollary} With the assumption and notation in Theorem \ref{mainTheorem}, the number of inequivalent irreducible representations of a finite inverse semigroup $S$ over $F$ equals the number of $S$-conjugacy classes in $S$.
\end{corollary}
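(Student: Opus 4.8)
The plan is to assemble the final statement from three bijections already established in the excerpt, composing them into a single chain. Concretely, I would argue that
\[
\{\text{irreducible representations of } S \text{ over } F\} \;\longleftrightarrow\; \bigsqcup_{e\in\Lambda} \widehat{FG}(e) \;\longleftrightarrow\; \bigsqcup_{e\in\Lambda} \{\text{group conjugacy classes of } G(e)\} \;\longleftrightarrow\; \{S\text{-conjugacy classes of } S\},
\]
and then conclude that the outer two sets are in bijection. The first arrow is exactly Lemma \ref{irreducibleRepresentations}: under the hypothesis on $\mathrm{char}\, F$, the map $\rho \mapsto \rho^*$ sends a full set of inequivalent irreducibles of the $G(e)$'s (disjointly over $e\in\Lambda$) onto a full set of inequivalent irreducibles of $FS$. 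The second arrow is classical group representation theory over an algebraically closed field, applied separately to each maximal subgroup $G(e)$; note that $\mathrm{char}\, F$ does not divide $|G(e)|$ precisely because of the hypothesis, so Maschke and the standard count of irreducibles via conjugacy classes apply. The third arrow is Theorem \ref{bijectionBetweenClasses}, which matches $S$-conjugacy classes with group conjugacy classes in the $G(e)$, $e\in\Lambda$, via the subrank construction.

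The steps, in order, would be: (1) invoke Lemma \ref{irreducibleRepresentations} to identify the irreducibles of $S$ with $\bigsqcup_{e\in\Lambda}\widehat{FG}(e)$, being careful that inequivalence is preserved in both directions — that is, $\rho^* \cong {\rho'}^*$ forces $e = e'$ and $\rho \cong \rho'$, which is part of the content of that lemma; (2) for each fixed $e\in\Lambda$, observe that the hypothesis forces $\mathrm{char}\,F \nmid |G(e)|$ and $F$ is algebraically closed, so $|\widehat{FG}(e)|$ equals the number of conjugacy classes of $G(e)$, with a canonical correspondence at the level of sets; (3) apply Theorem \ref{bijectionBetweenClasses} to get a bijection between $\bigsqcup_{e\in\Lambda}\{\text{conjugacy classes of }G(e)\}$ and the set of $S$-conjugacy classes; (4) compose (1)–(3). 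This is essentially the argument the paper sketches, just spelled out.

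The genuine subtlety — and where I would spend the most care — is step (2). The bijection between irreducible representations and conjugacy classes for a finite group over an algebraically closed field of coprime characteristic is \emph{not} canonical: there is no natural map, only an equality of cardinalities. So strictly one obtains a bijection only after choosing, for each $e$, an arbitrary pairing of $\widehat{FG}(e)$ with the conjugacy classes of $G(e)$. This is harmless for the statement as written (which asserts the existence of a bijection, not a canonical one), but it means the overall correspondence inherits this non-canonicity, in contrast to the clean, construction-based bijection of Theorem \ref{bijectionBetweenClasses}. I would flag this explicitly rather than pretend the composite is natural. A secondary point to verify carefully is that the disjoint union over $\Lambda$ is genuinely disjoint on the representation side: distinct $e, e'\in\Lambda$ yield no common irreducible of $FS$ because $\psi_e$ kills $FD_{e'}$ while $\psi_{e'}$ does not, so $\rho^*$ and ${\rho'}^*$ act differently on $FD_{e'}$; this is again folded into Lemma \ref{irreducibleRepresentations} but worth keeping in mind. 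Given all three ingredients are in hand, the proof is short: it is the two-line composition the paper gives, and the only honest caveat is the non-canonicity in step (2).
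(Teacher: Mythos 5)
Your proposal is correct and follows essentially the same route as the paper: the corollary is an immediate cardinality consequence of Theorem \ref{mainTheorem}, whose proof is exactly your composition of Lemma \ref{irreducibleRepresentations}, the classical count of irreducibles of each $G(e)$ over an algebraically closed field of coprime characteristic, and Theorem \ref{bijectionBetweenClasses}. Your remark on the non-canonicity of the group-theoretic step is a fair caveat but does not affect the statement, which only asserts equality of numbers.
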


\vspace{3mm}
\noindent
{\bf Acknowledgments}

The author would like to thank Dr. Reginald Koo for his useful suggestions.


\vspace{1mm}

\noindent Zhenheng Li\\
Department of Mathematical Sciences \\
University of South Carolina Aiken\\
Aiken, SC 29801, USA\\
\noindent Email: zhenhengl@usca.edu\\

\noindent Zhuo Li\\
Department of Mathematics \\
Xiangtan University\\
Xiangtan, Hunan 411105, P. R. China\\
\noindent Email: zli@xtu.edu.cn\\

\end{document}